\newtheorem{theorem}{Theorem}
\newtheorem*{theoremp}{Theorem}
\newtheorem{lemma}[theorem]{Lemma}
\newtheorem{corollary}[theorem]{Corollary}
\newtheorem{conjecture}[theorem]{Conjecture}
\newtheorem*{conjecturep}{Conjecture}
\newcommand{\R}{\mathbb{R}}
\newcommand{\Z}{\mathbb{Z}}
\title{Equal coefficients and tolerance in coloured Tverberg partitions}
\author{Pablo Sober\'on\thanks{Partially supported by ERC Advanced Research Grant no 267165 (DISCONV)} \\ pablo.soberon@ciencias.unam.mx \and ------------------------------------------------ \\ Department of Mathematics \\ University College London \\ Gower Sreet, London WC1E 6BT \\ United Kingdom
\\ ---------------------------------------------}
\begin{document}
 
\maketitle

\begin{abstract}
 The coloured Tverberg theorem was conjectured by B\'ar\'any, Lov\'{a}sz and F\"uredi \cite{BFL89} and asks whether for any $d+1$ sets (considered as colour classes) of $k$ points each in $\R^d$ there is a partition of them into $k$ colourful sets whose convex hulls intersect.  This is known when $d=1,2$ \cite{BL92} or $k+1$ is prime \cite{BMZ11}.  In this paper we show that $(k-1)d+1$ colour classes are necessary and sufficient if the coefficients in the convex combination in the colourful sets are required to be the same in each class.  We also examine what happens if we want the convex hulls of the colourful sets to intersect even if we remove any $r$ of the colour classes.  Namely, if we have $(r+1)(k-1)d+1$ colour classes of $k$ point each, there is a partition of them into $k$ colourful sets such that they intersect using the same coefficients regardless of which $r$ colour classes are removed.  We also investigate the relation of the case $k=2$ and the Gale transform, obtaining a variation of the coloured Radon theorem.
\end{abstract}

\section{Introduction}

Tverberg's theorem is a very well known result in discrete geometry about partitions of points and intersection of convex hulls.  It says the following,

\begin{theoremp}[Tverberg's theorem \cite{Tve66}]
 Given a set of $(k-1)(d+1)+1$ points in $\R^d$, there is a partition of them into $k$ sets $A_1, A_2, \ldots, A_k$ such that their convex hulls intersect.
\end{theoremp}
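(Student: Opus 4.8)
The plan is to derive Tverberg's theorem from the colourful Carath\'eodory theorem of B\'ar\'any by means of Sarkaria's tensor trick, which is also the natural vantage point for the coloured results of this paper. Write $N=(k-1)(d+1)+1$ and let $p_1,\dots,p_N\in\R^d$ be the given points. First I would lift each point to $\hat p_i=(p_i,1)\in\R^{d+1}$ and fix auxiliary vectors $v_1,\dots,v_k\in\R^{k-1}$ that span $\R^{k-1}$ and satisfy $\sum_{j=1}^k v_j=0$ (for instance the vertices of a simplex centred at the origin); the point of this choice is that the map $\R^k\to\R^{k-1}$, $e_j\mapsto v_j$, is surjective with kernel spanned by the all-ones vector, so the only linear dependence among the $v_j$ is $\sum_j v_j=0$. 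The ambient space for the argument is $\R^{d+1}\otimes\R^{k-1}\cong\R^{(k-1)(d+1)}$, whose dimension is exactly $N-1$.

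Next I would build $N$ colour classes, one per point, setting $C_i=\{\,\hat p_i\otimes v_j : j=1,\dots,k\,\}\subset\R^{(k-1)(d+1)}$. The origin lies in $\mathrm{conv}(C_i)$ for every $i$, since $\tfrac1k\sum_{j=1}^k \hat p_i\otimes v_j=\hat p_i\otimes\bigl(\tfrac1k\sum_{j=1}^k v_j\bigr)=0$. There are $N=(k-1)(d+1)+1$ colour classes in dimension $(k-1)(d+1)$, so the colourful Carath\'eodory theorem applies and produces a colourful choice: an index $j(i)\in\{1,\dots,k\}$ for each $i$ together with coefficients $\alpha_i\ge 0$, $\sum_i\alpha_i=1$, such that $\sum_{i=1}^N \alpha_i\,(\hat p_i\otimes v_{j(i)})=0$. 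Note that the dimension count makes the number of points in the hypothesis exactly right, with none to spare.

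The final step is to read this relation back as a Tverberg partition. Put $A_j=\{\,i : j(i)=j\,\}$ and $q_j=\sum_{i\in A_j}\alpha_i\hat p_i\in\R^{d+1}$; grouping the colourful relation by the value of $j(i)$ gives $\sum_{j=1}^k q_j\otimes v_j=0$. Examining one coordinate of the $q_j$ at a time, the scalar vector $((q_1)_a,\dots,(q_k)_a)$ lies in the kernel of $e_j\mapsto v_j$, which is spanned by the all-ones vector; hence all the $q_j$ coincide, say to a common value $q=(\bar q,\,s)$ with $\bar q\in\R^d$. The last coordinate gives $s=\sum_{i\in A_j}\alpha_i$ independent of $j$, so $ks=\sum_i\alpha_i=1$, forcing $s=1/k>0$ and in particular every $A_j$ non-empty. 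The first $d$ coordinates then show that $k\bar q\in\R^d$ lies in $\mathrm{conv}(A_j)$ for every $j$, which is the required common point, so $A_1,\dots,A_k$ is the desired Tverberg partition.

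The genuinely substantial ingredient is the colourful Carath\'eodory theorem itself, which I would invoke as a standard known result rather than reprove; granting it, the only real subtlety is the choice of the $v_j$ and the verification that the single linear dependence among them is what forces all the $q_j$ to be equal. I expect this linear-algebra bookkeeping, and keeping track of the tensor indices, to be the main place where care is needed, but no hard estimate or new idea is required beyond the tensor construction.
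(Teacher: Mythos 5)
Your proof is correct, and each step checks out: the classes $C_i=\{\hat p_i\otimes v_j\}$ capture the origin because $\sum_j v_j=0$; the dimension count $(k-1)(d+1)=N-1$ is exactly what the colourful Carath\'eodory theorem needs; and since the only linear dependence among $v_1,\dots,v_k$ is a scalar multiple of $\sum_j v_j=0$, slicing the relation $\sum_j q_j\otimes v_j=0$ coordinate-by-coordinate does force all the $q_j$ to coincide, with the lifted last coordinate giving $s=1/k>0$, which both guarantees every $A_j$ is non-empty and provides the renormalisation turning $q$ into a genuine common point of the $k$ convex hulls. Note, though, that the paper offers no proof of this statement to compare against: Tverberg's theorem is quoted from \cite{Tve66} as a known result. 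What you have written is Sarkaria's proof, which is precisely the scheme the paper says it adapts, and the comparison with the paper's own argument for Theorem \ref{coeficientes-iguales} is instructive: there the same two ingredients appear (the tensor map $F_j(\sigma)=\sum_i u_i\otimes z(j)_{\sigma(i)}$ together with Lemma \ref{lema-tecnico}, then B\'ar\'any's colourful Carath\'eodory theorem), but the lift to $\R^{d+1}$ is dropped. The reason is exactly the role your last coordinate plays: in the equal-coefficients setting all points of a colour class carry the same weight $\alpha_j$ by fiat, so the normalisation your appended $1$ enforces comes for free, and the argument runs in $\R^{(k-1)d}$ with $(k-1)d+1$ colour classes rather than in $\R^{(k-1)(d+1)}$ with $(k-1)(d+1)+1$ points. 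Your only cosmetic slips are harmless: $A_j$ is defined as an index set, so the conclusion should read $k\bar q\in\mathrm{conv}\{p_i : i\in A_j\}$, and the $A_j$ partition the index set $[N]$ rather than literally being the sets of points.
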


This is a generalisation of Radon's theorem from 1921 \cite{Rad21}, which treats the case $k=2$.  Moreover, the number $(k-1)(d+1)+1$ cannot be improved.  A colourful generalisation of this theorem was conjectured by B\'ar\'any, F\"uredi and Lov\'{a}sz in 1989 \cite{BFL89}.  However, the conjecture below in its full strength was first proposed by B\'ar\'any and Larman \cite{BL92}.  The colourful version states the following,

\begin{conjecturep}[Coloured Tverberg's theorem]
Let $F_1, F_2, \ldots, F_{d+1}$ be sets (which we consider as colour classes) of $k$ points each in $\R^d$.  Then there is a partition of their union into $k$ pairwise disjoint colourful sets $A_1, A_2, \ldots, A_k$ such that their convex hulls intersect.
\end{conjecturep}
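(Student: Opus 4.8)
The plan is to attack the conjecture through the configuration-space/test-map (CS/TM) paradigm of equivariant topology, which is the only route known to settle the prime cases. The first step is to encode a colourful partition combinatorially. A partition of $F_1 \cup \cdots \cup F_{d+1}$ into $k$ colourful sets $A_1, \ldots, A_k$ amounts to choosing, independently for each colour class $F_j$, an assignment of its $k$ points to the $k$ blocks in which no block receives two points of the same colour. For a fixed colour the admissible (partial, weighted) assignments are exactly the faces of the chessboard complex $\Delta_{k,k}$, whose faces are the partial matchings between the $k$ points of the class and the $k$ blocks. Allowing all colours and join weights, the configuration space is the $(d+1)$-fold join $X = \Delta_{k,k}^{*(d+1)}$, which carries a natural action of the symmetric group $\mathfrak{S}_k$ permuting the $k$ blocks.

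Next I would build the test map. To a point of $X$ — a colourful partition together with barycentric weights — associate the tuple of the $k$ resulting convex combinations in $\R^d$, and then project to the orthogonal complement of the diagonal, i.e. to $W_k \otimes \R^d$, where $W_k$ is the standard $(k-1)$-dimensional $\mathfrak{S}_k$-representation. A colourful Tverberg partition exists precisely when this $\mathfrak{S}_k$-equivariant map sends some configuration to the origin. Hence the geometric statement would follow from a purely topological one: there is \emph{no} $\mathfrak{S}_k$-equivariant map from $X$ to the sphere $S^{(k-1)d-1}$ of $W_k \otimes \R^d$, since such a map would be produced by radial projection if the test map avoided $0$.

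The heart of the argument is ruling out this equivariant map, and this is where I expect the main obstacle to lie. The strategy is to show that $X = \Delta_{k,k}^{*(d+1)}$ is highly connected relative to the target dimension and to invoke a Borsuk–Ulam-type theorem via the Dold theorem or an ideal-valued index computation. These computations go through cleanly only when one can restrict the symmetry to a cyclic subgroup $\mathbb{Z}/p$ of prime order, whose mod-$p$ cohomology has no zero divisors; moreover the $\mathfrak{S}_k$-action on the chessboard complex is not free, so one genuinely needs to pass to a well-behaved prime-order group. Through the reduction of Blagojević, Matschke and Ziegler this succeeds exactly when $k+1$ is prime, recovering the known case. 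For general $k$ the required prime-order symmetry is unavailable, the index obstruction either vanishes or is uncomputable by these means, and no substitute Borsuk–Ulam theorem is known — which is precisely why the conjecture remains open.

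Finally, for the small dimensions I would argue directly rather than topologically. When $d=1$ there are two colour classes on a line, and sorting the two classes and pairing them so that every resulting segment straddles a common median gives the $k$ intersecting segments by an elementary interval argument. When $d=2$ one argues combinatorially in the plane as in Bárány–Larman, using a pigeonhole selection of colourful triples together with planar convexity; these proofs sidestep the primality barrier but do not extend upward. Thus a complete proof of the general conjecture would require either a new equivariant obstruction valid for non-prime $k+1$ or an entirely non-topological method, and it is this gap that motivates the relaxed equal-coefficient variant studied in the present paper.
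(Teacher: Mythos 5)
You have not proved the statement, and you could not have been expected to: in the paper this is stated as a \emph{conjecture}, open except for $d=1,2$ (B\'ar\'any--Larman) and $k+1$ prime (Blagojevi\'c--Matschke--Ziegler), and the paper itself offers no proof of it --- its theorems concern the equal-coefficient relaxation precisely because the conjecture is out of reach. Your proposal is an accurate roadmap of the known configuration-space/test-map approach, but the load-bearing step --- the nonexistence of an $\mathfrak{S}_k$-equivariant map from $\Delta_{k,k}^{*(d+1)}$ to the sphere of $W_k^{\oplus d}$ --- is exactly the open problem, and you concede as much. So the genuine gap is not a repairable oversight but the absence of any argument at the crucial step. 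One quantitative point worth making explicit: a plain Dold/connectivity argument is ruled out not merely by the non-free $\mathfrak{S}_k$-action but by dimensions. The chessboard complex $\Delta_{k,k}$ is only about $\bigl(\lfloor (2k+1)/3\rfloor - 2\bigr)$-connected, so the join $\Delta_{k,k}^{*(d+1)}$ is roughly $\bigl((d+1)\lfloor (2k+1)/3\rfloor - 2\bigr)$-connected, which falls below the target sphere dimension $(k-1)d-1$ once $d\ge 2$ and $k$ is large; this is why even the prime case needs degree or obstruction computations (as in the BMZ reduction, which works with classes of size $\le k$ and the complexes $\Delta_{k+1,k}$ for $r=k+1$ prime) rather than connectivity alone.

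Two smaller comments. First, your chessboard-complex model is the right one in a specific sense the paper highlights in its remarks section: a naive equivariant scheme using the join $G * \cdots * G$ of discrete sets can only detect intersections with \emph{equal} coefficients, and the paper shows such intersections require $(k-1)d+1$ colour classes, so any viable topological attack must, as yours does, allow unequal join weights. Second, your $d=1$ argument is sound and can be made fully precise: sort $F_1 = \{a_1 < \cdots < a_k\}$ and $F_2 = \{b_1 < \cdots < b_k\}$ and pair $a_i$ with $b_{k+1-i}$; any two of the resulting intervals intersect (a strict separation would force either the $a$'s or the $b$'s out of order), and Helly's theorem on the line yields a common point. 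But for $d=2$ you only cite B\'ar\'any--Larman, and for $d\ge 3$ with $k+1$ composite you correctly report that nothing is known --- indeed, the failure of the topological Tverberg theorem for non-prime-powers (Frick, building on Mabillard--Wagner) is a warning that the equivariant map you want to rule out might, for some parameters, actually exist, so even the topological reformulation is not known to be equivalent to the geometric conjecture in general. In short: your write-up is a competent survey of why the conjecture is hard, but as a proof it stops exactly where the difficulty begins.
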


With a colourful set we mean a set that has exactly one element of each colour class.  Note that this is not a colourful version of Tverberg's theorem in the same way as the colourful versions of Helly's theorem or Carath\'eodory's theorem are \cite{Bar82}, since if all the colour classes are equal this does not yield the original theorem. By a colourful $k$-partition we refer to a family of $k$ pairwise disjoint colourful sets (even if every colour class has more than $k$ elements).  Historically this conjecture asked if there was a number $t=t(k,d)$ such that if $F_1, F_2, \ldots, F_{d+1}$ are sets of $t$ points each in $\R^d$, this type of $k$-partition always exists.  The existence of $t$ was first shown by B\'ar\'any, F\"uredi and Lov\'asz for $k=3$, $d=2$ \cite{BFL89}. The general case was settled by \v{Z}ivaljevi\'c and Vre\'cica, who showed that if $k$ was prime, $2k-1$ points were enough \cite{ZV92} (giving a bound of $4k-3$ points for all $k$).  Stated this way, the coloured Tverberg theorem is still open for most cases.

If $k+1$ is prime, this was solved by Blagojevi\'c, Matschke and Ziegler \cite{Z118} with topological methods (equivariant obstruction theory). At the core of their argument is the computation of degrees for a simplicial pseudomanifold; this was made explicit by Vre\'{c}ica and \v{Z}ivaljevi\'{c} \cite{Vrecica:2011bw}. By now there is also a purely geometrical proof of this fact by Matou\v{s}ek, Tancer and Wagner \cite{MTW11}, which still follows the same scheme. Blagojevi\'c et al.\ also gave an alternative proof using different, advanced topological tools (equivariant cohomology, index theory) in \cite{BMZ11}.
The optimal solution for the case $k+1$ prime gives the bound $t(k,d) \le 2k-2$ for all $k$, as noted in \cite{Z118}.  The case $d=1,2$ was solved by B\'ar\'any and Larman \cite{BL92} for any value of $k$.  For more references on this problem and historical notes we recomend \cite{MTW11}.

  The case $k=2$ (also known as the coloured Radon theorem) was originally solved by L\'aszl\'o Lov\'asz by constructing a function from $\mathbb{S}^d$ to $\R^d$ that depended on the pairs of points and then applying the Borsuk-Ulam theorem.  His proof is contained in \cite{BL92}.  Here we show a different proof of this fact using only basic linear algebra, which is a simplification of the methods that will be used later on.  In section \ref{gale} we present a third proof, using different methods (but still non-topological).

\begin{theoremp}[Coloured Radon]
 Given $d+1$ pairs of points $F_1, F_2, \ldots, F_{d+1}$ in $\R^d$, we can partition them into two disjoint colourful sets whose convex hulls have non-empty intersection.
\end{theoremp}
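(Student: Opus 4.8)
The plan is to reduce the whole question to a single linear dependence. Write each colour class as a pair $F_i = \{a_i, b_i\}$ for $i = 1, \ldots, d+1$, and form the difference vectors $c_i = a_i - b_i \in \R^d$. A colourful $2$-partition amounts to choosing, for each $i$, which of $a_i, b_i$ goes into $A_1$ (the other going to $A_2$); I encode this choice by a sign $\epsilon_i \in \{+1,-1\}$. Writing $a_i', b_i'$ for the representatives of colour $i$ landing in $A_1, A_2$, I insist that the two sets share a common point obtained with the \emph{same} coefficient $\lambda_i \ge 0$ on the colour-$i$ representative in each set. Requiring $\sum_i \lambda_i a_i' = \sum_i \lambda_i b_i'$ then reads $\sum_i \lambda_i\,\epsilon_i\, c_i = 0$.

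Next I would exploit the dimension count. Setting $\gamma_i = \lambda_i \epsilon_i$, the identity above becomes a linear dependence $\sum_{i=1}^{d+1} \gamma_i c_i = 0$ among the $d+1$ vectors $c_1, \ldots, c_{d+1}$ in $\R^d$. Since there are more vectors than the dimension, such a dependence with not all $\gamma_i = 0$ always exists. Reading it backwards recovers exactly the data I want: put $\epsilon_i = \operatorname{sign}(\gamma_i)$ and $\lambda_i = |\gamma_i|$, so that $\gamma_i = \lambda_i \epsilon_i$ and every $\lambda_i \ge 0$.

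It remains to assemble the partition and verify it certifies an intersection. For each $i$ with $\gamma_i > 0$ I place $a_i$ in $A_1$ and $b_i$ in $A_2$; for $\gamma_i < 0$ I reverse the assignment; for $\gamma_i = 0$ the representative carries coefficient $\lambda_i = 0$, so I may assign that pair arbitrarily. Normalising by $s = \sum_j |\gamma_j|$, which is positive because the dependence is non-trivial, the point $p = s^{-1}\sum_i \lambda_i a_i'$ is a genuine convex combination of the points of $A_1$, so $p \in \operatorname{conv}(A_1)$; and since $\sum_i \lambda_i (a_i' - b_i') = \sum_i \gamma_i c_i = 0$, the very same point equals $s^{-1}\sum_i \lambda_i b_i'$, whence $p \in \operatorname{conv}(A_2)$ as well.

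There is no hard analytic step here: the entire argument is the observation that $d+1$ difference vectors in $\R^d$ must be linearly dependent. The only point needing care is the bookkeeping around vanishing coefficients, namely that colours with $\gamma_i = 0$ contribute nothing to $p$ and hence can be split either way without breaking colourfulness, and that the normalising constant $s$ is nonzero. I would also note that the coefficients $\lambda_i$ are literally identical in the two convex combinations, so this argument in fact establishes the equal-coefficient strengthening announced in the abstract in the case $k = 2$, and previews the mechanism used in the general theorems.
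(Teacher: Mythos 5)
Your proposal is correct and is essentially the paper's own proof: both rest on the linear dependence of the $d+1$ difference vectors $x_i - y_i$ in $\R^d$, with the signs of the coefficients dictating the partition and the absolute values, suitably normalised, serving as the (equal) convex coefficients. Your extra bookkeeping around vanishing $\gamma_i$ and the positivity of the normalising constant $s$ just makes explicit what the paper's ``proper relabelling'' and ``scalar multiplication'' leave implicit.
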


\begin{proof}
 Writing $F_i = \{x_i, y_i\}$ in an arbitrary way, the points $x_i - y_i$ are linearly dependent.  Then there is a linear combination $ \sum_{i=1}^{d+1} \alpha_i (x_i - y_i) = 0$ where not all the coefficients are $0$.  After proper relabelling of the points, we may suppose that all the coefficients are non-negative and, by a scalar multiplication, their sum is $1$.  Thus,
$$
\sum_{i=1}^{d+1} \alpha_i x_i = \sum_{i=1}^{d+1} \alpha_i y_i,
$$
as we wanted.
\end{proof}

This proof has the advantage that it gives an algorithm to find the partition and the point of intersection since it only involves finding a linear dependence.  Note that this proof not only gives the partition we wanted, but also shows that to find the point of intersection we may ask that the corresponding points have the same coefficients.  This also happens in Lov\'asz's poof, since the images of antipodal points in his construction have this property.  Thus it seems natural to ask whether this could also be possible in the coloured Tverberg Theorem.

To state this in a more precise way, let $F_1, F_2, \ldots, F_n$ be sets of at least $k$ points each in $\R^d$ and $A_1, A_2, \ldots, A_k$ a colourful $k$-partition of them.  We can denote the elements of each $A_i$ by $A_i = \{ x^i_j  :  x^i_j \in F_j \}$.  We say that the convex hulls of the $A_i$ intersect with equal coefficients if there are coefficients $\alpha_1, \alpha_2, \ldots, \alpha_n$ of a convex combination such that $\sum_{j} \alpha_j x^i_j $ is the same point for all $i$.

  In this paper we show that this extension of the colourful Tverberg theorem is not possible with $d+1$ colour classes, regardless of the number of points each class has.  The following theorem gives the optimal number of colour classes and the optimal number of points per class.  Namely,

\begin{theorem}\label{coeficientes-iguales}
 Let $F_1, F_2, \ldots, F_{n}$ be sets of $t$ points each in $\R^d$.  If $n=(k-1)d+1$ and $t = k$, we can find a colourful $k$-partition $A_1, A_2, \ldots, A_k$ of them such that their convex hulls intersect with equal coefficients.  Moreover,  if $n < (k-1)d+1$ this theorem is false regardless of the value of $t$.
\end{theorem}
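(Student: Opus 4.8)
The plan is to recast the equal-coefficient requirement as the statement that the origin lies in the convex hull of a cleverly chosen family of vectors, and then to treat the two directions separately: the positive statement by the colourful Carath\'eodory theorem \cite{Bar82}, and the negative statement by a dimension count together with a genericity argument.

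First I would set up the reduction. Fix a colourful $k$-partition $A_1,\ldots,A_k$ with $A_i=\{x^i_j\}$, and let $c_j=\frac1k\sum_i x^i_j$ be the centroid of the $k$ chosen points of colour $j$. If $\sum_j\alpha_jx^i_j$ is a common point $p$ for all $i$, then averaging over $i$ forces $p=\sum_j\alpha_jc_j$, so, writing $w^i_j=x^i_j-c_j$, the condition is exactly $\sum_j\alpha_jw^i_j=0$ for every $i$. Since $\sum_i w^i_j=0$ for each colour, these $k$ vector equations are equivalent to the first $k-1$ of them; stacking them gives a single vector $W_j=(w^1_j,\ldots,w^{k-1}_j)\in\R^{(k-1)d}$ for each colour, and the equal-coefficient condition becomes $\sum_j\alpha_jW_j=0$ with $\alpha_j\ge 0$ and $\sum_j\alpha_j=1$, i.e.\ $0\in\mathrm{conv}\{W_1,\ldots,W_n\}$. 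Note that the ambient dimension is $(k-1)d$.

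For the positive part ($n=(k-1)d+1$, $t=k$) I would, for each colour $j$, let $S_j\subset\R^{(k-1)d}$ be the set of vectors $W_j$ obtained from all $k!$ bijective assignments of $F_j$ to the sets. The key computation is that the average of $W_j$ over all these assignments is $0$: in each block the point placed in a fixed set ranges uniformly over all points of $F_j$, whose centered average is $0$. Hence $0\in\mathrm{conv}(S_j)$ for every colour. Now the number of colours is $n=(k-1)d+1$, which is exactly one more than the dimension $(k-1)d$, so the colourful Carath\'eodory theorem \cite{Bar82} applies and yields a choice of one assignment per colour, i.e.\ a colourful $k$-partition, together with coefficients $\alpha_j\ge 0$ summing to $1$, for which $\sum_j\alpha_jW_j=0$. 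Reversing the reduction gives the common point $p=\sum_j\alpha_jc_j$ in all the convex hulls with equal coefficients.

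For the negative part I would fix $n\le (k-1)d$ and an arbitrary $t$, and show that a generic configuration admits no such partition. For any colourful $k$-partition the condition is again $0\in\mathrm{conv}\{W_1,\ldots,W_n\}$, with one stacked vector $W_j\in\R^{(k-1)d}$ per colour; but $n\le (k-1)d$ vectors in $\R^{(k-1)d}$ are, for points in general position, linearly independent, and then $\sum_j\alpha_jW_j=0$ forces all $\alpha_j=0$, contradicting $\sum_j\alpha_j=1$. For each fixed partition the locus where these vectors are linearly dependent is a proper algebraic subset of configuration space — it is enough to exhibit one configuration realizing independence, which is easy since each $W_j$ depends only on its own colour and can be prescribed freely — and there are only finitely many colourful $k$-partitions, so their union is still negligible. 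A configuration outside this union, which exists for every $t$, defeats all partitions at once.

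The main obstacle is the positive direction, and specifically the recognition that the equal-coefficient problem is precisely a colourful Carath\'eodory instance in dimension $(k-1)d$ once one verifies the averaging identity $0\in\mathrm{conv}(S_j)$; the exact numerical coincidence between the number of colour classes $(k-1)d+1$ and the ambient dimension plus one is what makes the theorem sharp, and it is mirrored in the lower bound by the failure of $n\le (k-1)d$ vectors to positively span the space.
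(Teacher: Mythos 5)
Your proposal is correct and takes essentially the same route as the paper: your stacked centered vectors $W_j=(x^1_j-c_j,\ldots,x^{k-1}_j-c_j)$ are a linear re-coordinatization of the paper's tensors $F_j(\sigma)=\sum_i u_i\otimes z(j)_{\sigma(i)}$ (your reduction is exactly Lemma \ref{lema-tecnico}), the positive direction is the same application of B\'ar\'any's colourful Carath\'eodory theorem to $(k-1)d+1$ origin-capturing classes in $\R^{(k-1)d}$, and the negative direction is the same finitely-many-proper-flats genericity argument. The only (harmless) differences are that you spell out the averaging computation showing $0\in\mathrm{conv}(S_j)$, which the paper leaves implicit, and you perturb the whole configuration rather than fixing $F_2,\ldots,F_n$ and moving only $F_1$, which in fact handles the degenerate case $\alpha_1=0$ more cleanly than the paper's phrasing.
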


In section \ref{remarks} we show that for $n=(k-1)d+1$ and $t=k$, there are at least $(k-1)!^{(k-1)d}$ such partitions.  It is conjectured that for the classical Tverberg theorem there are always at least $(k-1)!^d$ good partitions.  This is known as Sierksma's conjecture or the Dutch cheese conjecture.  Bounds for the number of Tverberg partitions have been obtained when $k$ is a prime power (\cite{Vucic:1993be}, \cite{Hell:2007tp}). The only non-trivial case that has been solved is $d=2, k=3$ by Hell \cite{Hell:2008em}.  We find it surprising that in this aspect the classical Tverberg theorem seems more resistant.

Recently many theorems with tolerance have appeared in this kind of settings.  We say that a property $P$ of points in $\R^d$ is true in $F$ with tolerance $r$ if $P(F\backslash C)$ is true for all $C \subset F$ of size $r$.  For example, $P$ can be ``captures the origin''.   There are now versions of the classical Helly, Carath\'eodory and Tverberg theorems with a tolerance condition \cite{MO11}, \cite{SS-unp}.  We show that the previous theorem also has a version with tolerance, but in this case we do not know that the number of colour classes is optimal.

\begin{theorem}\label{teorema-toleardo}
Let $d \ge 2$, $n= (r+1)(k-1)d+1$ and $F_1, F_2, \ldots, F_n$ be sets of $k$ points each in $\R^d$.  Then we can find a colourful $k$-partition $A_1, A_2, \ldots, A_k$ of them such that for any set $C$ of $r$ colour classes, the convex hulls of $A_1 \backslash C$, $A_2 \backslash C$, $\ldots$, $A_k \backslash C$ intersect with equal coefficients.
\end{theorem}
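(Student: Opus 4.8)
The plan is to reduce the statement to a single depth condition and then to produce the required partition by applying Theorem~\ref{coeficientes-iguales} in a suitably lifted space. Fix a colourful $k$-partition, i.e.\ a choice of which point of each $F_j$ goes into which part, and for each $j$ let $D_j \in \R^{(k-1)d}$ be the vector stacking the $k-1$ differences $x^1_j - x^2_j, \ldots, x^1_j - x^k_j$ of the points of $F_j$ in the chosen order. A short computation shows that, after deleting the classes in a set $C$, the convex hulls of $A_1\setminus C, \ldots, A_k\setminus C$ meet with equal coefficients if and only if $0 \in \mathrm{conv}\{D_j : j \notin C\}$. Hence the theorem is equivalent to the purely linear-algebraic assertion that the permutations can be chosen so that the origin survives in the convex hull of the $D_j$ after the removal of any $r$ of them; equivalently, so that the origin has Tukey depth at least $r+1$ among the $n = (r+1)(k-1)d + 1$ vectors $D_j$.

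First I would obtain the partition itself. Since $n = (k-1)\bigl((r+1)d\bigr) + 1$, this is exactly the number of colour classes that Theorem~\ref{coeficientes-iguales} requires in dimension $(r+1)d$. I would therefore lift each point $x$ of $F_j$ to $\tilde x = (x, \tau_j x, \tau_j^2 x, \ldots, \tau_j^r x) \in \R^{(r+1)d}$, using distinct reals $\tau_1, \ldots, \tau_n$, and apply Theorem~\ref{coeficientes-iguales} upstairs. This yields in one stroke the permutations (hence the vectors $D_j$) together with nonnegative coefficients $\alpha_j$, which in general position may be taken strictly positive, satisfying $\sum_j \alpha_j \tilde D_j = 0$. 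Reading this equality layer by layer gives the family of relations $\sum_j \alpha_j \tau_j^{\,t} D_j = 0$ for $t = 0, 1, \ldots, r$.

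Next, for a fixed deletion set $C$ with $|C| = r$, I would manufacture the certificate $0 \in \mathrm{conv}\{D_j : j \notin C\}$ out of these relations. Taking any polynomial $p$ of degree at most $r$ and forming $\sum_j \alpha_j\, p(\tau_j)\, D_j = 0$ produces a linear dependence among the $D_j$; choosing $p$ to vanish on $\{\tau_c : c \in C\}$ removes the deleted classes, and the coefficients $\alpha_j p(\tau_j)$ then give a dependence supported off $C$. For $k=2$ this is precisely the coloured Radon mechanism, and I expect the Gale transform of Section~\ref{gale} to turn these sign patterns into the desired orientations of the vectors $x_j - y_j$.

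The hard part will be \emph{nonnegativity}. A degree-$\le r$ polynomial vanishing at the $r$ points of $C$ is forced to be a scalar multiple of $\prod_{c\in C}(\tau - \tau_c)$, which changes sign, so the naive certificate above is only a signed dependence, not a convex combination; equivalently, the moment relations by themselves guarantee only that every functional is nonpositive on roughly $r/2$ of the $D_j$, i.e.\ Tukey depth about $r/2$. Closing the gap to depth $r+1$ with the stated number of classes is the crux: it means choosing the permutations (the orbit representatives $D_j$ within each class) so that the origin is genuinely \emph{deep}, not merely interior. I would attack this either by a sign-change (Chebyshev-system) refinement of the extraction step that exploits that the forbidden signs are confined to the $r$ classes of $C$, or by an induction on $r$ whose base case is Theorem~\ref{coeficientes-iguales}; in the latter the delicate point is reconciling the permutation of a class that must be shared between the tolerance-$(r-1)$ sub-partition and the extra $(k-1)d$ classes, which is exactly what makes the count tight at $(r+1)(k-1)d+1$ rather than the naive $(r+1)\bigl((k-1)d+1\bigr)$.
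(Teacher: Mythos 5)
Your reduction is sound and matches the paper's framework: the condition $0 \in \mathrm{conv}\{D_j : j \notin C\}$ for every $|C| = r$ is exactly the statement that the points $F_j(\sigma_j) \in \R^{(k-1)d}$ of Lemma~\ref{lema-tecnico} capture the origin with tolerance $r$ (your stacked-difference vectors $D_j$ are an invertible linear image of the tensors $F_j(\sigma_j)$). But the construction you propose does not deliver that condition, and the gap is precisely the one you flag yourself. Applying Theorem~\ref{coeficientes-iguales} to the moment-type lifting $\tilde x = (x, \tau_j x, \ldots, \tau_j^r x)$ yields the relations $\sum_j \alpha_j \tau_j^{\,t} D_j = 0$ for $t = 0, \ldots, r$, and extracting a dependence supported off $C$ forces the multiplier polynomial to be a multiple of $\prod_{c \in C}(\tau - \tau_c)$, which changes sign at each root. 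Your own sign-change analysis is correct and quantifiable: for any direction $v$, the scalars $\alpha_j \langle v, D_j\rangle$ annihilate all monomials of degree $\le r$, so (if not all zero) they exhibit at least $r+1$ sign changes along the $\tau$-order, which guarantees only on the order of $r/2$ indices of each sign --- Tukey depth roughly $r/2$, not $r+1$. To force depth $r+1$ by moment relations alone you would need degree roughly $2r$, i.e.\ about $(2r+1)(k-1)d+1$ colour classes, strictly worse than the stated bound. Neither of your two proposed repairs is carried out, and the inductive one runs exactly into the sharing-of-permutations obstruction you describe, so the proposal as written does not close.

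What the paper does instead is orthogonal to your lifting: it never leaves $\R^{(k-1)d}$, and it obtains depth from \emph{symmetry} rather than from moments. The sets $F_j(\Sigma_{k,k})$ carry a $\Z_k$-action $m \cdot F_j(\tau) = F_j(\beta^m \tau)$ for a $k$-cycle $\beta$, and every orbit captures the origin, since $\sum_{m=1}^{k} F_j(\beta^m \tau) = \bigl(\sum_i u_i\bigr) \otimes \bigl(\sum_l z(j)_l\bigr) = 0$; moreover the action preserves origin-capturing because permuting the simplex vertices $u_i$ extends to an invertible linear map of $\R^{(k-1)d}$. These are exactly the hypotheses of Lemma~\ref{lema-tolerado} (the colourful Carath\'eodory theorem with tolerance from \cite{SS-unp}) with $p = (k-1)d$ and $n = (r+1)p + 1$, whose group-size condition $|\Z_k| = k \le (k-1)d$ is where the hypothesis $d \ge 2$ enters --- a hypothesis your argument never uses, which is itself a warning sign. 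The lemma then selects one point per class capturing the origin with tolerance $r$, and Lemma~\ref{lema-tecnico} translates this back into the partition. In short: the equivariance of the configuration is the missing ingredient that supplies genuine depth $r+1$ at the count $(r+1)(k-1)d+1$, and it is not recoverable from the single linear relation your lifted application of Theorem~\ref{coeficientes-iguales} provides.
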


The proofs of Theorems \ref{coeficientes-iguales} and \ref{teorema-toleardo} are given in section \ref{pruebas}. If $k=2$, theorem \ref{teorema-toleardo} gives a nice version of the coloured Radon theorem with tolerance.

\begin{corollary}[Coloured Radon theorem with tolerance]
 Let $d \ge 2$, and $F_1$, $F_2$, $\ldots$, $F_{(r+1)d+1}$ be pairs of points in $\R^d$.  Then we can split them into two disjoint colourful sets $A_1, A_2$  such that, if we remove any $r$ colour classes, the convex hulls of what is left in $A_1$ and $A_2$ intersect.
\end{corollary}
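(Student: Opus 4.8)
The plan is simply to read off this statement as the special case $k=2$ of Theorem~\ref{teorema-toleardo}, so essentially no new work is required: the whole burden lies in the theorem, whose proof is carried out in section~\ref{pruebas}. First I would substitute $k=2$ into the hypotheses of Theorem~\ref{teorema-toleardo}. The number of colour classes becomes
$$
n = (r+1)(k-1)d + 1 = (r+1)(2-1)d + 1 = (r+1)d + 1,
$$
which is exactly the number $(r+1)d+1$ appearing in the corollary, and each colour class is required to have $k=2$ points, i.e.\ to be a pair of points, matching the hypothesis that the $F_i$ are pairs.

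Next I would translate the conclusion. A colourful $2$-partition $A_1, A_2$ is, by definition, a family of two disjoint colourful sets, i.e.\ exactly a splitting of the union into two disjoint colourful sets as demanded in the corollary (each pair $F_i$ contributes one of its two points to $A_1$ and the other to $A_2$). Theorem~\ref{teorema-toleardo} guarantees that for every choice $C$ of $r$ colour classes the convex hulls of $A_1\backslash C$ and $A_2 \backslash C$ intersect with equal coefficients; since intersection with equal coefficients is a strictly stronger statement than mere intersection, I can simply discard the coefficients and retain precisely the conclusion of the corollary.

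Finally, as a consistency check I would confirm that the residual configurations are always large enough for the plain coloured Radon theorem to even be applicable: after deleting $r$ colour classes we are left with $n - r = (r+1)d + 1 - r = r(d-1) + d + 1 \ge d+1$ classes, using $d \ge 2$. This is reassuring but is not itself part of the argument: the real content, supplied entirely by Theorem~\ref{teorema-toleardo}, is that a \emph{single} partition works simultaneously for all $\binom{n}{r}$ choices of the deleted classes, which is precisely the tolerance phenomenon. Consequently I do not expect any genuine obstacle here beyond careful bookkeeping of the parameters and the observation that a ``colourful $2$-partition'' is synonymous with ``two disjoint colourful sets''.
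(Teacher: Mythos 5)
Your proposal is correct and coincides with the paper's own treatment: the corollary is stated there precisely as the case $k=2$ of Theorem~\ref{teorema-toleardo}, with the parameter count $(r+1)(k-1)d+1 = (r+1)d+1$ and the observation that intersection with equal coefficients implies intersection. Your parameter bookkeeping and the identification of a colourful $2$-partition with a splitting into two disjoint colourful sets are exactly the (implicit) content of the paper's derivation.
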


It would be interesting to know if the number of parts in the corollary above is enough for a coloured Tverberg theorem with tolerance (without equal coefficients).  Namely,

\begin{conjecture}[Coloured Tverberg theorem with tolerance]
 Let $d \ge 2$, and $F_1, F_2, \ldots, F_{(r+1)d+1}$ be sets of $k$ points each in $\R^d$.  Then we can split them into $k$ disjoint colourful sets $A_1, A_2, \ldots, A_k$  such that, if we remove any $r$ colour classes, the convex hulls of what is left in $A_1, A_2, \ldots, A_k$ intersect.
\end{conjecture}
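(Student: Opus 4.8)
The plan is to reduce the conjecture, \emph{conditionally on the coloured Tverberg theorem}, to a purely structural statement about a single family of bijections, and to read off the sharp count $(r+1)d+1$ from a covering argument. Note first that the case $r=0$ is exactly the coloured Tverberg theorem ($F_1,\dots,F_{d+1}$ split into $k$ intersecting colourful sets), so no unconditional proof is possible without settling that conjecture; I would therefore aim for a statement that is conditional on coloured Tverberg in general and unconditional when $d\le 2$ or $k+1$ is prime. The elementary engine is the following sufficiency principle: if $A_1,\dots,A_k$ is a colourful $k$-partition of all $n$ classes and $B$ is a set of $d+1$ colour classes such that the colourful sets \emph{restricted to} $B$ already share a common point $z^{B}$, then for every removal $C$ with $B\cap C=\varnothing$ we have $z^{B}\in\mathrm{conv}(A_i\setminus C)$ for all $i$. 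Thus it suffices to produce one global choice of bijections $\pi_1,\dots,\pi_n$ (one per colour class), together with a family $\mathcal B$ of $(d+1)$-element blocks, such that (i) each $B\in\mathcal B$ is a valid coloured Tverberg partition under the $\pi_j$, and (ii) every $r$-set $C$ of colour classes is disjoint from some $B\in\mathcal B$, i.e.\ the transversal number of $\mathcal B$ exceeds $r$.

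Condition (ii) is where the sharp count enters. If the blocks in $\mathcal B$ were pairwise \emph{disjoint} we would need $r+1$ of them, hence $(r+1)(d+1)$ classes — precisely the wasteful bound one wants to avoid. With only $(r+1)d+1$ classes the blocks must overlap, and the game is to keep the transversal number above $r$ while controlling those overlaps. For $r\le d$ this is already clean: arrange the $n=(r+1)d+1$ classes on a cycle and let $\mathcal B$ consist of the $n$ windows of $d+1$ consecutive classes; a hitting set can leave no cyclic gap of length $\ge d+1$, so it has size at least $\lceil n/(d+1)\rceil=r+1>r$, giving (ii). For larger $r$ one needs a more global family (the family of \emph{all} $(d+1)$-subsets has transversal number $rd+1>r$, but is far too large to control), so part of the work is to design a structured family $\mathcal B$ that is sparse enough to realise yet still has transversal number exceeding $r$ on exactly $(r+1)d+1$ classes.

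The hard part, and the genuine obstacle, is condition (i) for \emph{overlapping} blocks: a single class lies in several windows, so its bijection $\pi_j$ must make every window through it a colourful Tverberg partition at once. Blockwise application of coloured Tverberg produces compatible bijections only when the blocks are disjoint; forcing \emph{simultaneous} validity on overlapping blocks is strictly stronger than coloured Tverberg, and is exactly the step I cannot expect to be routine. I would attack it by imitating the linear algebra behind Theorems~\ref{coeficientes-iguales} and~\ref{teorema-toleardo}: there a single kernel vector of a difference matrix certifies the intersection on every sub-configuration simultaneously, which is precisely why the equal-coefficients versions achieve a clean (if larger, by a factor $k-1$) bound with automatic compatibility — and why for $k=2$, where that factor is trivial, the conjecture is already the coloured Radon corollary. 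The goal would be to relax ``equal coefficients'' just enough to remove the factor $k-1$ while retaining a single algebraic certificate valid on all windows, e.g.\ by solving for the common points $z^{B}$ and the bijections together as a sign-controlled kernel problem, or by a continuity/degree argument deforming a disjoint-block solution into an overlapping one. Establishing such a compatible certificate at the sharp parameter $(r+1)d+1$, together with the covering family above, would give the conjecture; absent it, the method still yields the conditional statement with the weaker disjoint-block count, which is why I regard the compatibility of overlapping colourful Tverberg partitions as the crux.
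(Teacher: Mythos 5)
First, be aware that the statement you are attempting is stated in the paper as an open conjecture: the paper offers no proof of it, and for good reason, since the case $r=0$ is exactly the coloured Tverberg conjecture, which is open in general. You recognise this correctly, and the rigorous parts of your reduction are sound: the sufficiency principle (a common point of the colourful sets restricted to a block $B$ of $d+1$ classes survives any removal $C$ with $B\cap C=\varnothing$) is correct, and so is your transversal arithmetic --- disjoint blocks force $(r+1)(d+1)$ classes; the $n$ cyclic windows of length $d+1$ on $n=(r+1)d+1$ classes have transversal number $\lceil n/(d+1)\rceil = r+1$ precisely when $r\le d$; and the family of all $(d+1)$-subsets has transversal number $n-d = rd+1 > r$. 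So, modulo the coloured Tverberg conjecture (hence unconditionally for $d\le 2$ or $k+1$ prime by \cite{BL92}, \cite{BMZ11}), your disjoint-block argument proves a genuine weakening of the conjecture with $(r+1)(d+1)$ colour classes in place of $(r+1)d+1$, and for $k=2$ the full statement is already the paper's Corollary of Theorem \ref{teorema-toleardo}.

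However, the step you yourself flag as the crux --- a single global system of bijections under which every window of an overlapping block family is simultaneously a coloured Tverberg partition --- is not established, and nothing in your sketch makes progress on it: the suggested mechanisms (a ``sign-controlled kernel problem'', a ``continuity/degree argument deforming a disjoint-block solution'') are placeholders, not arguments. Since every block family with transversal number exceeding $r$ on only $(r+1)d+1$ classes must overlap (your own counting shows disjointness costs $(r+1)(d+1)$), the overlapping-compatibility problem is not a technical refinement but the entire content of the conjecture, so no instance of the sharp bound is obtained even for $r\le d$. Two further caveats: insisting on a per-block witness point $z^B$ fixed in advance is possibly stronger than needed, since the conjecture allows the intersection point to depend on the removed set $C$; and the paper's own route to tolerance --- Lemma \ref{lema-tolerado}, the colourful Carath\'eodory theorem with tolerance from \cite{SS-unp}, combined with the tensor construction of Lemma \ref{lema-tecnico} --- already produces exactly such removal-robust certificates, but at the cost of the equal-coefficients constraint and hence the factor $(k-1)$ in Theorem \ref{teorema-toleardo}; relaxing that constraint without losing the single algebraic certificate is precisely the open problem, and your proposal restates it rather than solves it.
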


In section \ref{gale} we investigate the relation between the Gale transform and the coloured Radon theorem.  With this we are able to obtain the following variation.

\begin{theorem}\label{variacion-rara}
Given a set of $k+d+2$ points in $\R^d$ of $k$ possible colours, we can find two disjoint sets $A$ and $B$ such that they have the same number of points of each colour and their convex hulls intersect.
\end{theorem}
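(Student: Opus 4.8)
The plan is to use the Gale transform to translate the statement into a bisection problem in a higher-dimensional space. Since we are given $n = k+d+2$ points $p_1,\ldots,p_n$ in $\R^d$, I would first assume (after a routine general-position reduction, or by projecting out an affine subspace if the points fail to affinely span $\R^d$) that the lifted points $\hat p_i = (p_i,1)\in\R^{d+1}$ have full rank $d+1$. The space of linear dependences among them, i.e. the kernel of the $(d+1)\times n$ matrix with columns $\hat p_i$, then has dimension $n-d-1 = k+1$. Choosing a basis and reading off the coefficients row by row produces the Gale transform: vectors $\bar p_1,\ldots,\bar p_n\in\R^{k+1}$, each inheriting the colour of $p_i$.

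Next I would invoke the Gale duality dictionary. For any $c\in\R^{k+1}$ the vector $\gamma$ with $\gamma_i = \langle \bar p_i,c\rangle$ is a linear dependence among the $\hat p_i$, so $\sum_i \gamma_i p_i = 0$ and $\sum_i \gamma_i = 0$. Setting $A=\{p_i:\gamma_i>0\}$ and $B=\{p_i:\gamma_i<0\}$, the identity $\sum_{\gamma_i>0}\gamma_i p_i = \sum_{\gamma_i<0}(-\gamma_i)p_i$, after normalising, exhibits a common point of $\operatorname{conv}(A)$ and $\operatorname{conv}(B)$. Thus every central hyperplane $c^{\perp}$ in $\R^{k+1}$ yields a pair of disjoint sets with intersecting convex hulls, where membership is decided by the sign of $\langle\bar p_i,c\rangle$ and points lying on the hyperplane are simply discarded. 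The theorem then follows if I can choose $c$ so that each colour class is split evenly, that is, so that $c^{\perp}$ has equally many Gale vectors of every colour on each of its two open sides.

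The remaining task is therefore to bisect the $k$ colour classes (as finite point sets in $\R^{k+1}$) simultaneously by a hyperplane through the origin. I would obtain this from the ham-sandwich theorem: a central hyperplane in $\R^{k+1}$ is determined by a unit normal $c\in\mathbb{S}^{k}$, the imbalance function $c\mapsto(\#\text{positive}-\#\text{negative})$ for the $k$ colours is odd in $c$, and the standard Borsuk--Ulam argument (after replacing the point masses by small balls to restore continuity and passing to a limit) produces a $c$ on which all $k$ imbalances vanish. An odd-sized colour class simply leaves one of its points on the hyperplane, which is harmlessly discarded, so parity of the class sizes causes no difficulty.

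The main obstacle is the gap between \emph{equal cardinalities} and \emph{equal weights}: a single affine dependence among the original points — the naive approach — balances only the total coefficient mass of each colour, not the number of points, so the cardinality condition cannot be encoded directly by linear algebra and one is forced onto the Gale side, where ``cardinality'' becomes the genuinely combinatorial quantity ``number of points on a given side of a hyperplane.'' Secondary points to nail down are the general-position and degenerate-configuration reductions making the Gale transform well defined, the continuity repair in the ham-sandwich step, and the verification that the chosen $c$ does not push all points to one side (so that $A$ and $B$ are both nonempty); the latter is guaranteed because the $\bar p_i$ span $\R^{k+1}$ and the hyperplane is balanced.
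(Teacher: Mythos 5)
Your proposal is correct and is essentially the paper's own proof: both pass to the Gale transform in $\R^{k+1}$ and then simultaneously bisect the $k$ colour classes by a hyperplane through the origin, the only difference being that the paper secures the ``through the origin'' constraint by painting the origin as a $(k+1)$-st colour and citing the ham-sandwich theorem for $k+1$ sets in $\R^{k+1}$, while you parametrise central hyperplanes by $c\in\mathbb{S}^k$ and run the Borsuk--Ulam argument for the $k$ odd imbalance functions directly --- the same mechanism, unrolled. One small caveat, which the paper's one-line sketch glosses over as well: the limiting ham-sandwich argument only guarantees that each \emph{open} half-space contains at most half of each colour class, so after discarding the points on $c^\perp$ a colour could in degenerate configurations be split, say, $2$--$0$ with two of its points on the hyperplane; the repair is to observe that $|\,\#(A\cap F_i)-\#(B\cap F_i)\,|$ is bounded by the number of points of $F_i$ lying on $c^\perp$, and these zero-coefficient points may be assigned to $A$ or $B$ at will to equalise the counts (discarding one leftover point in odd classes), which is harmless because enlarging $A$ or $B$ only enlarges their convex hulls.
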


Note that the theorem above does not imply coloured Radon.  This is due to the fact that the theorem does not take into consideration how the colours are distributed.  However, the coloured Radon theorem can be proved with the same method if this extra information is considered.

\section{Preliminaries}

The proof of theorem \ref{coeficientes-iguales} will be in the same spirit of Sarkaria's proof of Tverberg's theorem \cite{Zar92} but without lifting the points to $\R^{d+1}$.  We are able to use the additional structure on the type of partitions we want in order to reduce the number of dimensions we need.

Let $F_1, F_2, \ldots, F_n$ be $n$ sets of $t$ points each in $\R^d$ and $k \le t$ a positive integer.  For convenience we consider each $F_j$ as an ordered set and denote its elements by $F_j = \{ z(j)_1, z(j)_2, \ldots, z(j)_t\}$.  This order given to each $F_j$ is not important at the moment, but it will be when counting the number of good partitions.  Denote by $\Sigma_{k,t}$  the set of injective functions from $[k]=\{1,2,\ldots, k\}$  to $[t]$.  We can consider the colourful $k$-partitions $(A_1, A_2, \ldots, A_k)$ as vectors $(\sigma_1, \sigma_2, \ldots, \sigma_n)$ in $(\Sigma_{k,t})^n$ by defining
$$
\sigma_j (i) = m \ \mbox{if and only if } x^i_j = z(j)_m
$$
or equivalently
$$
A_i = \{ x^i_j : x^i_j = z(j)_{\sigma_j (i)}\}
$$
We call $(\sigma_1, \sigma_2, \ldots, \sigma_n)$ the function representation of $(A_1, A_2, \ldots, A_k)$.  If $t=k$ we call this the permutation representation of $(A_1, A_2, \ldots, A_k)$.

To see this in a simpler way, write the elements of each $F_j$ $k$ times in $k$ rows.  Then choose from the first row the elements of $A_1$, in the second row the elements of $A_2$ and so on.  Then the function representation of $(A_1, A_2, \ldots, A_k)$ becomes apparent, as in the figure below.  The reverse operation can also be performed.

\psset{unit=0.45cm}
\begin{pspicture}(0,15)
\def\figurafuncion{
\psframe[fillstyle=solid, fillcolor=gray](2,4.5)(4,5.4)
	\psframe[fillstyle=solid, fillcolor=gray](8,3.5)(9.9,4.5)
	\psframe[fillstyle=solid, fillcolor=gray](4,2.5)(6,3.5)
	\psframe[fillstyle=solid, fillcolor=gray](0.1,0.5)(2,1.5)
	\psframe[fillstyle=solid, fillcolor=gray](15,4.5)(17,5.4)
	\psframe[fillstyle=solid, fillcolor=gray](13,3.5)(15,4.5)
	\psframe[fillstyle=solid, fillcolor=gray](19,2.5)(20.9,3.5)
	\psframe[fillstyle=solid, fillcolor=gray](11.1,0.5)(13,1.5)
	\psframe[fillstyle=solid, fillcolor=gray](23.1,4.5)(25,5.4)
	\psframe[fillstyle=solid, fillcolor=gray](27,3.5)(29,4.5)
	\psframe[fillstyle=solid, fillcolor=gray](31,2.5)(32.9,3.5)
	\psframe[fillstyle=solid, fillcolor=gray](25,0.5)(27,1.5)
	\rput(1,5){$z(1)_1$}
	\rput(1,4){$z(1)_1$}
	\rput(1,3){$z(1)_1$}
	\rput(1,1){$z(1)_1$}
	\rput(3,5){$z(1)_2$}
	\rput(3,4){$z(1)_2$}
	\rput(3,3){$z(1)_2$}
	\rput(3,1){$z(1)_2$}
	\rput(5,5){$z(1)_3$}
	\rput(5,4){$z(1)_3$}
	\rput(5,3){$z(1)_3$}
	\rput(6,2.2){$\vdots$}
	\rput(5,1){$z(1)_3$}
	\rput(7,5){$\ldots$}
	\rput(7,4){$\ldots$}
	\rput(7,3){$\ldots$}
	\rput(7,1){$\ldots$}
	\rput(9,5){$z(1)_t$}
	\rput(9,4){$z(1)_t$}
	\rput(9,3){$z(1)_t$}
	\rput(9,1){$z(1)_t$}
	\psframe(0,0.4)(10,5.5)
	\psline[arrows=->](5,0)(5,-2)
	\rput(5,-3){$\sigma_1 (1) = 2, \sigma_1 (2) = t, \sigma_1(3)=3$}
	\rput(5,-4){$ \ldots,  \sigma_1(k) = 1$}
	\rput(5,6){$F_1$}
	\rput(12,5){$z(1)_1$}
	\rput(12,4){$z(2)_1$}
	\rput(12,3){$z(2)_1$}
	\rput(12,1){$z(2)_1$}
	\rput(14,5){$z(2)_2$}
	\rput(14,4){$z(2)_2$}
	\rput(14,3){$z(2)_2$}
	\rput(14,1){$z(2)_2$}
	\rput(16,5){$z(2)_3$}
	\rput(16,4){$z(2)_3$}
	\rput(16,3){$z(2)_3$}
	\rput(16,1){$z(2)_3$}
	\rput(18,5){$\ldots$}
	\rput(18,4){$\ldots$}
	\rput(18,3){$\ldots$}
	\rput(18,1){$\ldots$}
	\rput(17,2.2){$\vdots$}
	\rput(20,5){$z(2)_t$}
	\rput(20,4){$z(2)_t$}
	\rput(20,3){$z(2)_t$}
	\rput(20,1){$z(2)_t$}
	\psframe(11,0.4)(21,5.5)
	\psline[arrows=->](16,0)(16,-2)
	\rput(16,-3){$\sigma_2 (1) = 3, \sigma_2 (2) = 2, \sigma_2(3)=t$}
	\rput(16,-4){$ \ldots,  \sigma_2(k) = 1$}
	\rput(22,3){$\cdots$}
	\rput(16,6){$F_2$}
	\rput(24,5){$z(n)_1$}
	\rput(24,4){$z(n)_1$}
	\rput(24,3){$z(n)_1$}
	\rput(24,1){$z(n)_1$}
	\rput(26,5){$z(n)_2$}
	\rput(26,4){$z(n)_2$}
	\rput(26,3){$z(n)_2$}
	\rput(26,1){$z(n)_2$}
	\rput(28,5){$z(n)_3$}
	\rput(28,4){$z(n)_3$}
	\rput(28,3){$z(n)_3$}
	\rput(28,1){$z(n)_3$}
	\rput(30,5){$\ldots$}
	\rput(30,4){$\ldots$}
	\rput(30,3){$\ldots$}
	\rput(30,1){$\ldots$}
	\rput(29,2.2){$\vdots$}
	\rput(32,5){$z(n)_t$}
	\rput(32,4){$z(n)_t$}
	\rput(32,3){$z(n)_t$}
	\rput(32,1){$z(n)_t$}
	\psframe(23,0.4)(33,5.5)
	\psline[arrows=->](28,0)(28,-2)
	\rput(28,-3){$\sigma_n (1) = 1, \sigma_n (2) = 3, \sigma_n(3)=t$}
	\rput(28,-4){$ \ldots,  \sigma_n(k) = 2$}
	\rput(28,6){$F_n$}
	\rput(34,5){$A_1$}
	\rput(34,4){$A_2$}
	\rput(34,3){$A_3$}
	\rput(34,1){$A_k$}
	}
	\rput(-3,6){\figurafuncion}
\end{pspicture}

Let $u_1, u_2, \ldots, u_k$ be the vertices of a regular simplex in $\R^{k-1}$ centred at the origin.  We use these sets to represent the distribution in the $k$-partition.  Given $\sigma \in \Sigma_{k,t}$ we define $F_j (\sigma) \in \R^{(k-1)d}$ as 
$$
F_j (\sigma) = \sum_{i=1}^{k} u_i \otimes z(j)_{\sigma (i)}
$$
where $\otimes$ represents the tensor product.
\begin{lemma}\label{lema-tecnico}
Let $F_1, F_2, \ldots, F_n$ be sets of $t$ points each in $\R^d$ and $(A_1, A_2, \ldots, A_k)$ be a colourful $k$-partition of them.  Then for coefficients $\alpha_1, \alpha_2, \ldots, \alpha_n$ we have that $\sum_{j=1}^n \alpha_j x^i_j$ is the same point for all $i$ if and only if $\sum_{j=1}^n \alpha_j F_j(\sigma_j) = 0$, where $(\sigma_1, \sigma_2, \ldots, \sigma_n)$ is the function representation of $(A_1, A_2, \ldots, A_k)$.
\end{lemma}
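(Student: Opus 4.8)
The plan is to unwind the definitions so that $\sum_j \alpha_j F_j(\sigma_j)$ collapses into a single tensor whose vanishing can be read off directly. First I would substitute the function representation: since $\sigma_j(i)=m$ means precisely $x^i_j = z(j)_m$, we have $z(j)_{\sigma_j(i)} = x^i_j$, so that
$$
F_j(\sigma_j) = \sum_{i=1}^k u_i \otimes z(j)_{\sigma_j(i)} = \sum_{i=1}^k u_i \otimes x^i_j.
$$
By bilinearity of the tensor product, summing against the $\alpha_j$ and interchanging the order of summation gives
$$
\sum_{j=1}^n \alpha_j F_j(\sigma_j) = \sum_{i=1}^k u_i \otimes \Bigl( \sum_{j=1}^n \alpha_j x^i_j \Bigr) = \sum_{i=1}^k u_i \otimes p_i,
$$
where $p_i := \sum_{j=1}^n \alpha_j x^i_j \in \R^d$. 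After this reduction the lemma becomes the purely linear-algebraic assertion that, under the identification $\R^{(k-1)d} \cong \R^{k-1}\otimes \R^d$, one has $\sum_{i=1}^k u_i \otimes p_i = 0$ if and only if $p_1 = p_2 = \cdots = p_k$.

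For the ``if'' direction I would use that the $u_i$ are the vertices of a simplex centred at the origin, so $\sum_{i=1}^k u_i = 0$; if every $p_i$ equals a common point $p$, then $\sum_i u_i \otimes p_i = \bigl(\sum_i u_i\bigr)\otimes p = 0$. For the converse, the key fact is that the vertices of a regular $(k-1)$-simplex in $\R^{k-1}$ span the space and admit a unique linear relation up to scaling, namely $\sum_i u_i = 0$; equivalently, any $k-1$ of them are linearly independent. Assuming $\sum_i u_i \otimes p_i = 0$, I would apply $\mathrm{id}\otimes \phi$ for an arbitrary linear functional $\phi \in (\R^d)^*$, obtaining $\sum_i \phi(p_i)\, u_i = 0$ in $\R^{k-1}$. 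Since the only relation among the $u_i$ is the all-ones one, the scalar coefficients must be constant, so $\phi(p_1) = \cdots = \phi(p_k)$; letting $\phi$ range over all functionals forces $p_1 = \cdots = p_k$.

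The main point to get right is the identification of $\R^{(k-1)d}$ with $\R^{k-1}\otimes\R^d$ together with the description of the relations among the simplex vertices; once these are in hand the equivalence is immediate. I expect the only genuine subtlety to be checking cleanly that the relation space of the $u_i$ is exactly one-dimensional and spanned by the all-ones vector, which is precisely where the linear independence of any $k-1$ of the vertices enters.
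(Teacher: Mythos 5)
Your proof is correct and takes essentially the same route as the paper: applying an arbitrary linear functional $\phi$ to the second tensor factor is just the paper's reduction to the case $d=1$ (arguing coordinate by coordinate), and both arguments hinge on the same key fact that $\sum_{i=1}^k \beta_i u_i = 0$ if and only if $\beta_1 = \beta_2 = \cdots = \beta_k$, which holds because the $u_i$ span $\R^{k-1}$ and so their relation space is one-dimensional, spanned by the all-ones vector. No gaps.
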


\begin{proof}
 It suffices to prove this for $d=1$ since we can repeat the same argument for each coordinate in $\R^d$.  In this case $u_i \otimes z(j)_m$ is simply $z(j)_m u_i$.  Note that, for scalars $\beta_1, \beta_2, \ldots, \beta_k$, we have that $\sum_{i=1}^k \beta_i u_i = 0$ if and only if $\beta_1 = \beta_2 = \ldots = \beta_k$.  Thus $\sum_{j=1}^n \alpha_j F_j(\sigma_j) = 0$ if and only if $\sum_{j=1}^n \alpha_j z(j)_{\sigma_j (i)}$ is the same for all $i$, as we wanted.
\end{proof}

\section{Proofs of theorems \ref{coeficientes-iguales} and \ref{teorema-toleardo}}\label{pruebas}

\begin{proof}[Proof of theorem \ref{coeficientes-iguales}]
We first show that for $n = (k-1)d+1$ and $t=k$ there are such coefficients.  For this it suffices to note that each of the sets $F_j (\Sigma_{k,k})$ captures the origin.  We have $n$ sets that capture the origin in $\R^{n-1}$, so by the B\'ar\'any colourful version of Carath\'eodory's theorem \cite{Bar82}, there are permutations $\sigma_1, \sigma_2, \ldots, \sigma_n$ such that the set $\{ F_1 (\sigma_1), F_2 (\sigma_2), \ldots, F_n (\sigma_n)\}$ captures the origin.  Using lemma \ref{lema-tecnico} we are done.

Now suppose that $n \le (k-1)d$.  If $\sigma_1$ is fixed, note that by varying $F_1$, then $F_1(\sigma_1)$ can be any point of $\R^{(k-1)d}$.  Suppose we are given $F_2, F_3, \ldots, F_n$ and we want to find all the choices for $F_1$ that satisfy the theorem.  Given any coloured $k$-partition $(A_1, A_2, \ldots, A_k)$, by applying the same permutation to each colour class, we can assume that in its function representation $\sigma_1$ is the identity.  If we can find injective functions $\sigma_2, \sigma_3, \ldots, \sigma_n$ such that the set $\{ F_1 (\sigma_1), F_2 (\sigma_2), \ldots, F_n (\sigma_n)\}$ captures the origin then $F_1 (\sigma_1)$ must be in the $(n-1)$-flat generated by ${0, F_2(\sigma_2),\ldots, F_n(\sigma_n)}$.  Note that there is only a finite number of possible choices for $\sigma_2, \sigma_3, \ldots, \sigma_n$.  Since a finite number of flats of codimension at least $1$ cannot cover $\R^{(k-1)d}$, we are done. 
\end{proof}

To show the versions with tolerance with this method we would need some version of the colourful Carath\'eodory theorem with tolerance.  Conveniently this was done in Lemma 1 in the proof of Tverberg's theorem with tolerance \cite{SS-unp}.  Here we re-write this lemma to fit the current notation.

\begin{lemma}\label{lema-tolerado}
Let $p \ge 1$ and $r \ge 0$ be integers, $n=(r+1)p+1$, $F_1, F_2, \ldots, F_n \subset \R^p$ subsets of $\R^p$ that capture the origin, $F = \bigcup_{j=1}^n F_j$ and $G$ a group with $|G| \le p$.  Suppose there is an action of $G$ in each of $F_1, F_2, \ldots, F_n$ such that the following holds.
\begin{itemize}
	\item For all $x \in F_j$, $Gx$ captures the origin, for all $j$.
	\item For all $A \subset F$, if $A$ captures the origin then so does $gA$, for all $g \in G$.
\end{itemize}
Then we can find elements $x_1 \in F_1, x_2 \in F_2, \ldots, x_n \in F_n$ such that $\{ x_1, x_2, \ldots, x_n\}$ captures the origin with tolerance $r$.
\end{lemma}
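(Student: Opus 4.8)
The plan is to reduce the statement to the ordinary colourful Carath\'eodory theorem \cite{Bar82} through the following dictionary for tolerance. A finite multiset $X \subset \R^p$ captures the origin with tolerance $r$ if and only if every closed halfspace $\{y : \langle y, w\rangle \ge 0\}$ with $w \neq 0$ contains at least $r+1$ points of $X$: the worst deletion against a fixed direction $w$ removes the $r$ points with largest inner product, so the origin survives every $r$-deletion exactly when no direction leaves at most $r$ points on its nonnegative side. Thus the goal becomes to choose $x_j \in F_j$ so that for every $w \neq 0$ at least $r+1$ of the inner products $\langle x_j, w\rangle$ are nonnegative.

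First I would record what colourful Carath\'eodory gives on its own. Since each $F_j$ captures the origin, B\'ar\'any's theorem extracts from any $p+1$ of the classes a rainbow selection whose convex hull contains the origin. Peeling greedily, $(r+1)(p+1)$ classes would yield $r+1$ pairwise disjoint rainbow origin-capturing subsets, and since an $r$-deletion meets at most $r$ of them one subset always survives, giving tolerance $r$. The content of the lemma is that the group hypotheses let me do this with only $n=(r+1)p+1$ classes, i.e.\ they save exactly the $r$ classes by which this naive bound overshoots. Note that $n$ is precisely the number of colour classes required by colourful Carath\'eodory in dimension $(r+1)p$, which indicates the natural route.

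Accordingly, the main step I would carry out is to build a $G$-equivariant lift $\Psi_j : F_j \to \R^{(r+1)p}$, one per class, with two properties: (a) each lifted class $\Psi_j(F_j)$ captures the origin of $\R^{(r+1)p}$, so that colourful Carath\'eodory produces a rainbow tuple $\Psi_1(x_1),\dots,\Psi_n(x_n)$ with $0$ in its convex hull; and (b) any convex combination equal to $0$ upstairs forces, for every $w \neq 0$, at least $r+1$ of the downstairs points $x_j$ to lie in $\{\langle \cdot , w\rangle \ge 0\}$. Property (a) is where the first hypothesis enters: the orbit-capturing of $Gx$ together with $|G| \le p$ is what makes the lifted classes surround the origin in the extra $rp$ coordinates despite their small cardinality, while the invariance hypothesis guarantees the lift is compatible with the $G$-action so that origin-capturing transports between the two scales. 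For (b) the extra coordinates must be chosen, by tensoring each point against a fixed configuration attached to the $r+1$ ``copies'' and using the $G$-action to glue copies together, so that concentrating the weight of a convex representation of $0$ on any $r$ indices is linearly obstructed.

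The step I expect to be the real obstacle is exactly the design of these extra $rp$ coordinates, making a single selection serve all directions $w$ simultaneously. A purely ``disjoint blocks'' reading of the peeling is already seen to be too weak: for $p=2$ with $G=\{\pm \mathrm{id}\}$, the classes $\{\pm e_1\}$ and $\{\pm e_2\}$ each capture the origin and are $G$-invariant, yet no rainbow pair drawn from them captures the origin, so the covering subsets cannot be assembled block by block and the symmetry must instead be woven through the selection globally. Verifying that the equivariant lift simultaneously satisfies (a) and (b), equivalently that the group action genuinely converts the $r$ ``missing'' classes into $r$ units of tolerance, is the crux, and is where I would spend the effort.
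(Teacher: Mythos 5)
Your preliminary reductions are sound---the halfspace dictionary for tolerance is correct (a finite set captures the origin with tolerance $r$ if and only if every closed halfspace bounded by a hyperplane through the origin contains at least $r+1$ of its points), the naive peeling bound $(r+1)(p+1)$ is right, and the observation that $n=(r+1)p+1$ is exactly the colourful Carath\'eodory count in dimension $(r+1)p$ is a reasonable heuristic---but the proposal stops precisely where the proof has to happen. The equivariant lift $\Psi_j$ with properties (a) and (b) is never constructed; you yourself flag its design as ``the real obstacle,'' so what you have is a plan, not a proof. Moreover, the natural candidate is demonstrably inadequate: take the block lift $x \mapsto e_i \otimes x \in \R^{r+1}\otimes \R^p \cong \R^{(r+1)p}$ with upstairs classes $\hat F_j = \{ e_i \otimes x : i \in [r+1],\ x \in F_j\}$. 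This satisfies (a), since a capturing combination of $F_j$ can be spread with weight $\frac{1}{r+1}$ over the blocks, but it fails (b): a rainbow convex representation $\sum_j \lambda_j\, e_{i_j}\otimes x_j = 0$ only says $\sum_{j : i_j = i} \lambda_j x_j = 0$ for each block $i$, and nothing forces more than one block to carry positive weight. For instance, if all the $F_j$ coincide, there is a rainbow zero-combination supported entirely in block $1$ (all other selected points taking weight $0$), which downstairs yields plain capture, i.e.\ tolerance $0$. Since colourful Carath\'eodory is purely existential, you cannot steer it away from such combinations; so the requirement that ``concentrating the weight on any $r$ indices is linearly obstructed'' is exactly the property that fails for the obvious lift, and no alternative construction is offered.

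The deeper issue is where the group hypotheses enter. Your outline assigns them to property (a), but (a) holds for the block lift with no group at all; the hypotheses---orbits $Gx$ capture the origin, capture is $G$-invariant, and $|G|\le p$ (this last inequality is why the paper needs $d\ge 2$ when applying the lemma with $G=\Z_k$ and $p=(k-1)d$)---must instead act at the selection stage, repairing a selection whose weight is badly distributed, and it is not clear any fixed per-class linear lift computed from $F_j$ alone can encode them, since (b) is a universally quantified condition on convex representations while the hypotheses concern how $G$ moves points \emph{within} the classes. Note also that the paper itself gives no proof of this lemma: it is quoted verbatim from Lemma 1 of \cite{SS-unp}, and in this paper the tensor lifting (lemma \ref{lema-tecnico}) is performed \emph{before} invoking the present lemma, not inside its proof---so the lemma cannot simply be another round of lifting plus colourful Carath\'eodory, and must be established by an argument that genuinely uses the $G$-action on selections. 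Your $\{\pm e_1\},\{\pm e_2\}$ example correctly diagnoses why blockwise assembly cannot work, but the theorem's actual content---converting the $r$ ``missing'' colour classes into $r$ units of tolerance via the group action---remains unproved in your proposal.
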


\begin{proof}[Proof of theorem \ref{teorema-toleardo}]
Note that there is an action of $\Sigma_{k,k}$ (as the symmetric group) in $F_j(\Sigma_{k,k})$ given by $\sigma F_j (\tau) = F_j (\sigma \tau)$.  In particular, this induces an action of $\Z_k$ given by $m F_j(\tau) = F_j (\beta^m \tau)$ where $\beta$ is a cycle of length $k$.  Consider $p=(k-1)d$ and $F_1 (\Sigma_{k,k})$,  $F_2 (\Sigma_{k,k})$, $\ldots$, $F_n(\Sigma_{k,k})$ with their action of $\Z_k$ (all using the same $\beta$).  If $d \ge 2$, these sets with their actions of $\Z_k$ satisfy the conditions of lemma \ref{lema-tolerado}.  Using this lemma and lemma \ref{lema-tecnico} we obtain the result.
 
\end{proof}

\section{Remarks on the proof}\label{remarks}

In the general case of the coloured Tverberg theorem, it seems tempting to extend Lov\'asz's argument for the case $k >2$.  Namely,  we consider the $(d+1)$-fold join $U = \underbrace{G \ast G \ast \cdots \ast G}_{d+1}$ of a discrete set with $k$ elements and try to prove that for any linear function $f: U \longrightarrow \R^d$ there are points $x_1, x_2, \ldots, x_k$ in disjoint faces of $U$ such that $f(x_1)=f(x_2)=\ldots = f(x_k)$.  Then it seems natural to use the combinatorial properties of $U$ (or a further refinement of this space) to treat this as an equivariant topology problem with the natural action of $G$ in $U$, as it is done in cases like the topological Tverberg theorem \cite{Lon01}.  However, a simple approach of this kind is bound to give points $x_1, x_2, \ldots, x_k$ which have the same coefficients when written as combination of the $d+1$ copies of $G$.  Since we showed that for this case at least $(k-1)d+1$ colour classes are necessary, other topological methods are needed for the general case.

In the proof of theorem \ref{coeficientes-iguales}, the final argument was that each $F_j (\Sigma_{k,k})$ captured the origin.  However, a smaller set would suffice, namely $F_j (\{ \beta, \beta^2, \ldots, \beta^k\})$ captures the origin for any $\beta$ a cycle of length $k$.  For simplicity we can take $\beta$ the permutation that shifts every element to the right, except the last one.  Fix the order of $F_1$ and assign a cyclic order (that is, an order up to iterated applications of $\beta$) to each $F_j$ with $j>1$.  Note that the partitions we obtain for each way to assign cyclic orders to these families of points are all different.  Thus, there are at least $(k-1)!^{(k-1)d}$ of these partitions.  This is similar to the conjectured $(k-1)!^d$ different partitions for the typical Tverberg theorem in the sense that both are roughly $(k-1)!^{{m}/{k}}$ where $m$ is the number of points that are given in the theorem.  

The last step towards showing that $(k-1)d+1$ colour classes are necessary for Theorem \ref{coeficientes-iguales} relies on the fact that a finite number of flats of codimension at least $1$ cannot cover the whole space.  However, more can be said.  Let $n\le (k-1)d$ and $F_1, F_2, \ldots, F_n$ be sets of $t$ random points each in $\R^d$ where each point is chosen according to a (possibly different) distribution where hyperplanes have measure $0$.  Then, with probability $1$, there is no colourful $k$-partition $A_1, A_2, \ldots, A_k$ such that the convex hulls of the $A_i$ intersect with equal coefficients.

\section{Coloured Radon and the Gale transform}\label{gale}

Here we present a third proof of the coloured Radon, again without topological tools.  This is done via the Gale transform.  The Gale transform of a set of $n$ points $a_1, a_2, \ldots, a_n$ in $\R^d$ that are not all contained in a hyperplane is a set of $n$ points $b_1, b_2, \ldots, b_n$ in $\R^{n-d-1}$ such that the following two conditions hold
\begin{itemize}
	\item $\sum_i b_i = 0$ and
	\item for every two disjoint subsets $X, Y \subset [n]$, the convex hull of the sets $\{ a_i :  i \in X\}$ and $\{ a_i  :  i \in Y \}$ intersect if and only if there is a hyperplane $H$ through the origin in $\R^{n-d-1}$ that leaves $\{ b_i  :  i \in X\}$ in one (closed) side, $\{b_i  :  i \in Y\}$ in the other (closed) side and goes through every other $b_i$.
\end{itemize}

See, for example, \cite{grunbaum2003convex} for a complete exposition.  We are now ready to prove the coloured Radon theorem.

\begin{proof}
Let $F_1, F_2, \ldots, F_{d+1}$ be the sets of pairs of points in $\R^d$ and $F = \cup_i F_i$.  Let $J$ be the Gale transform of $F$.  Then $J$ consists of $d+1$ pairs of points $J_1, J_2, \ldots, J_{d+1}$ in $\R^{d+1}$.  We want to find a hyperplane $H$ through the origin that splits every pair in $J$ simultaneously.  Let $m_i$ be the midpoints of the $J_i$.  Note that since the sum of all the points in $J$ is $0$, then the sum of all $m_i$ is $0$.  With this the hyperplane $H$ through $m_1, m_2, \ldots, m_{d+1}$ goes through the origin, and splits each pair.
\end{proof}

With this idea in mind we can prove theorem \ref{variacion-rara}.  We use the ham-sandwich theorem (see \cite{Mat2003libro}) in addition to the Gale transform.

\begin{proof}[Proof of theorem \ref{variacion-rara}]
Consider a set $X$ of $k+d+2$ points in $\R^d$ each painted with one of $k$ possible colors.  Its Gale transform is a set $Y$ in $\R^{k+1}$.  If we consider the origin of $\R^{k+1}$ painted with a new color, we have $k+1$ coloured finite sets.  Using the ham-sandwich theorem, there is a hyperplane that splits them all evenly.  This hyperplane must go through the origin, and thus gives the two subset of $X$ we we looking for.
\end{proof}

If $k=1$ this gives the following corollary

\begin{corollary}
Given $d+3$ points in $\R^d$, there are two disjoint subset $A$ and $B$ of the same size such that their convex hulls intersect.
\end{corollary}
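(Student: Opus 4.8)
The plan is to read the Corollary as the special case $k=1$ of Theorem \ref{variacion-rara}. When there is only one colour, requiring that $A$ and $B$ have ``the same number of points of each colour'' means precisely $|A|=|B|$, so the two conclusions coincide: Theorem \ref{variacion-rara} applied to the $d+3=1+d+2$ (monochromatic) points produces disjoint sets $A,B$ of the same size whose convex hulls meet. Strictly speaking nothing new is needed; the only thing to check is that the $k=1$ instance of the argument really delivers \emph{equal} cardinalities, so I would spell that instance out to keep it self-contained.

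Concretely, assume first that the $d+3$ points are not all contained in a hyperplane, so that their Gale transform is defined; it is a set $Y$ of $d+3$ points $b_1,\dots,b_{d+3}$ in $\R^2$ (since $(d+3)-d-1=2$) with $\sum_i b_i=0$. By the Gale correspondence recalled above, any line $\ell$ through the origin that leaves one subset of the $b_i$ in each open halfplane and passes through the remaining ones produces, from the two open sides, original point sets whose convex hulls intersect. Hence it suffices to find a line through the origin that splits $Y$ into two parts of equal size. To get one, I would rotate a directed normal $w_\theta$ through a half-turn and track $f(\theta)=\#\{i:\langle b_i,w_\theta\rangle>0\}-\#\{i:\langle b_i,w_\theta\rangle<0\}$. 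For generic $\theta$ no $b_i$ is orthogonal to $w_\theta$, and replacing $\theta$ by $\theta+\pi$ swaps the sides, so $f(\theta+\pi)=-f(\theta)$; as $\theta$ varies $f$ changes only when some $b_i$ crosses $\ell$, at which instant that $b_i$ lies on $\ell$, so a discrete intermediate-value argument forces $f=0$ for some position of $\ell$. There the two open halfplanes contain equally many points of $Y$, giving $|A|=|B|$ (with at most one stray $b_i$ on $\ell$ when $d+3$ is odd). Equivalently, one may invoke the ham-sandwich theorem in $\R^2$ for the two ``colours'' $Y$ and $\{0\}$: bisecting the one-point set $\{0\}$ forces $\ell$ through the origin, while bisecting $Y$ yields the balanced split.

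The point to watch, and the step I expect to be the main obstacle, is exactly this equal-cardinality requirement: an arbitrary Radon-type partition of $d+3$ points need not be balanced, and it is the \emph{bisecting} property of the ham-sandwich cut (or of the rotating line) that upgrades ``convex hulls intersect'' to ``equal sizes with intersecting convex hulls.'' The only remaining wrinkle is the degenerate case in which the $d+3$ points do lie in a hyperplane, so the Gale transform is not directly available; there I would either perturb the points slightly and pass to the limit, or restrict to their affine hull $\R^{d'}$ with $d'<d$, where the same $d+3>d'+3$ points leave even more room. Apart from this equal-size bookkeeping and the parity of $d+3$, the statement is a direct specialisation of Theorem \ref{variacion-rara}.
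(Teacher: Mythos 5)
Your proposal is correct and takes essentially the same route as the paper: the corollary is exactly the $k=1$ specialisation of Theorem \ref{variacion-rara}, whose proof there is the Gale transform into $\R^{k+1}$ (here $\R^2$) together with a ham-sandwich bisection forced through the origin, of which your rotating-line argument is just an explicit planar instance. Your attention to the degenerate case where the $d+3$ points lie in a hyperplane is a minor extra care that the paper leaves implicit.
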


It would be interesting to find an analogous statement for Tverberg partitions.  Namely, finding the smallest $n=n(d,k)$ such that for every set of $n$ points in $\R^d$ we can find $k$ pairwise disjoint subsets of the same size whose convex hulls intersect.   Clearly $(k-1)(d+1) < n \le k(d+1)$.

\section{Acknowledgements}

We would like to thank Imre B\'ar\'any for the helpful discussions and observations on this subject and his help on simplifying the proof of the coloured Radon theorem in the introduction.
\bibliographystyle{amsplain}

\bibliography{referencias.bib}
\end{document}